\newtheorem{theorem}{Theorem}
\newtheorem{proposition}[theorem]{Proposition}
\newtheorem{lemma}[theorem]{Lemma}
\theoremstyle{definition}
\newtheorem{definition}[theorem]{Definition}
\newcommand{\N}{\mathbf{N}}
\renewcommand{\epsilon}{\varepsilon}
\DeclareMathOperator{\sgn}{sgn}
\newcommand{\wb}{\circ}
\newcommand{\bb}{\bullet}
\newcounter{thmlistcnt}
\newenvironment{thmlist}%
	{\setcounter{thmlistcnt}{0}%
	\begin{list}{\emph{(\roman{thmlistcnt})}}{%
		\usecounter{thmlistcnt}%
		\setlength{\topsep}{0pt}%
		\setlength{\leftmargin}{24pt}%
		\setlength{\itemsep}{0pt}%
		\setlength{\labelwidth}{17pt}
		\setlength{\itemindent}{0pt}}%
	}%
	{\end{list}}%
\newcounter{caseslistcnt}
	{\setcounter{caseslistcnt}{0}%
	\begin{list}{(\roman{thmlistcnt})}{%
		\usecounter{thmlistcnt}%
		\setlength{\topsep}{0pt}%
		\setlength{\leftmargin}{48pt}%
		\setlength{\itemsep}{0pt}%
		\setlength{\labelwidth}{17pt}
		\setlength{\itemindent}{0pt}}%
	}%
	{\end{list}}%
\newcounter{typelistcnt}
	{\setcounter{typelistcnt}{0}%
	\begin{list}{}{
		\usecounter{typelistcnt}%
		\setlength{\topsep}{0pt}%
		\setlength{\leftmargin}{55pt}%
		\setlength{\itemsep}{0pt}%
		\setlength{\labelwidth}{55pt}
		\setlength{\itemindent}{00pt}}%
	}%
	{\end{list}}%
\newcommand{\vvdash}{\,\vdash\,}
\subjclass[2010]{05E05, secondary: 05E10, 20C30}
\begin{document}
\title[A plethystic Murnaghan--Nakayama rule]{A combinatorial proof of a plethystic 
Murnaghan--Nakayama rule}
\date{\today}
%\begin{abstract}
%This article gives a short combinatorial proof of a plethystic generalization of the Murnaghan--Nakayama rule.
%The main result expresses the product of a Schur function with the plethysm $p_r \circ h_n$ as an integral
%linear combination of Schur functions. The proof uses a sign-reversing involution on
%sequences of bead moves on James' abacus,
%inspired by the arguments in N. Loehr, 
%\emph{Abacus proofs of {S}chur function identities}, SIAM J.
%Discrete Math. \textbf{24} (2010), 1356--1370.
%\end{abstract}

\maketitle
\thispagestyle{empty}

\section{Introduction}

The purpose of this note is to give a combinatorial 
proof of a  plethystic generalization of the Murnaghan--Nakayama rule,
first stated in \cite{DLT}. The key step in the proof uses a  sign-reversing pairing
on sequences of bead moves on James' abacus (see \cite[page 78]{JK}), inspired by
the theme of \cite{LoehrAbacus} `when beads bump, objects cancel'.
The only prerequisites are the Murnaghan--Nakayama rule and basic facts about plethysms of symmetric functions.
The necessary combinatorial background on border-strips and James' abacus is recalled in Section~2 below.

Let $s_{\lambda / \nu}$ denote the Schur function corresponding to the skew-partition~$\lambda / \nu$ and
let $p_r$ denote the power-sum symmetric function of degree \hbox{$r \in \N$}. Let
$\sgn(\lambda / \nu) = (-1)^\ell$ if $\lambda / \nu$ is a border-strip of height $\ell \in \N_0$, and
let $\sgn(\lambda / \nu) = 0$ otherwise.
%(We define border-strips and give the necessary background on James' abacus
%in Section~2 below.)
The Murnaghan--Nakayama rule (see, for instance, \cite[Theorem 7.17.1]{StanleyII}) states that if $\nu$
is a partition and $r \in \N$ then
\begin{equation}
\label{eq:MN} s_\nu p_r = \sum_{\lambda \vvdash r + |\nu|} \sgn(\lambda / \nu) s_\lambda.
%\sum_{\lambda / \nu \vvdash r} \sgn(\lambda / \nu) s_\lambda.
\end{equation}

To generalize~\eqref{eq:MN} we need some further definitions. Let $\lambda / \nu$ be a skew-partition
and let $d$ be minimal such that $\lambda_d > \nu_d$.
We say that an $r$-border-strip $\lambda / \mu$ is the \emph{final} $r$-border-strip in $\lambda / \nu$
if $\mu / \nu$ is a skew-partition and 
$\lambda_d > \mu_d$. 
Thus $\lambda / \mu$ has a (necessarily unique) final $r$-border-strip if and only if
the~$r$ boxes at the top-right of the rim of the Young diagram of $\lambda / \nu$
can be removed to leave a skew-partition.
We say that
$\lambda / \nu$ is \emph{$r$-decomposable} if there exist partitions
$\mu^{(0)}, \mu^{(1)}, \ldots, \mu^{(m)}$ such that
\[ \lambda = \mu^{(0)} \supset \mu^{(1)} \supset \ldots \supset \mu^{(m)} = \nu \]
and $\mu^{(i)}/\mu^{(i+1)}$ is the final $r$-border-strip in $\mu^{(i)} / \nu$ for each $i$.
In this case we define
$\sgn_r(\lambda / \nu) = \sgn(\mu^{(0)}/\mu^{(1)}) \ldots \sgn(\mu^{(m-1)} / \mu^{(m)})$. If $\lambda / \nu$
is not $r$-decomposable we define $\sgn_r(\lambda / \mu) = 0$.
Let $f \circ g$ denote the plethysm of symmetric functions $f$ and $g$, as defined in 
\cite[I.8]{MacDonald} or \cite[Appendix~2]{StanleyII}.
Finally let $h_m = s_{(m)}$ denote the complete symmetric function of degree $m \in \N_0$.

We shall prove that if $\nu$ is a partition and $r$, $m \in \N$ then
\begin{equation}\label{eq:main}
s_\nu (p_r \circ h_m) = \sum_{\lambda \vvdash rm + |\nu|} \sgn_r(\lambda / \nu) s_\lambda.
\end{equation}
Taking $m=1$ recovers~\eqref{eq:MN}.
The formula for $s_\mu (p_r \circ h_{m_1}\ldots h_{m_d})$ given in \cite[page 29]{DLT}
follows by repeated applications of~\eqref{eq:main}.
This formula is proved in \cite{DLT}
using Muir's rule. Similarly~\eqref{eq:main} implies combinatorial formulae for $s_\mu (p_{r_1}\ldots p_{r_c} \circ h_m)$,
and, more generally, for  $s_\mu (p_{r_1}\ldots p_{r_c} \circ h_{m_1}\ldots h_{m_d})$.
 An alternative proof of~\eqref{eq:main} using the character theory of the symmetric group
was given in \cite[Proposition 4.3]{EPW}. The special case $\nu = \varnothing$ of~\eqref{eq:main} follows from
\cite[I.8, Example 8]{MacDonald}.

\section{Background on border-strips, signs and James' abacus}

%To make this article self-contained apart from the pre-requisites already mentioned we recall the necessary
%background on James' abacus.

Let $\lambda$ be a partition of $n$ with $p$ parts. The \emph{Young diagram} of $\lambda$ is the set
\[ [\lambda] = \{ (i,j) : 1 \le i \le p,\ 1 \le j \le \lambda_i \}. \]
The elements of a Young diagram are called \emph{boxes}. 
The \emph{rim} of $\lambda$ consists of all boxes
$(i,j)$ such that $(i+1,j+1)\not\in [\lambda]$.
A \emph{border-strip} of length $s$ in $\lambda$ consists of $s$ adjacent boxes 
in the rim of $\lambda$
whose removal from $[\lambda]$ leaves the Young diagram of a partition. 
The \emph{top-right} and \emph{bottom-left} boxes of a border-strip are defined
with respect to the `English' convention for drawing Young diagrams,  shown in
Figure~1 below.
The \emph{height} of a border-strip with bottom-left
box $(i,j)$ and top-right box $(i',j')$ is $i-i'$. 

\begin{figure}[b]
\phantom{tf}
\vspace*{-30pt}
\begin{center}
\hspace*{-12pt}\includegraphics{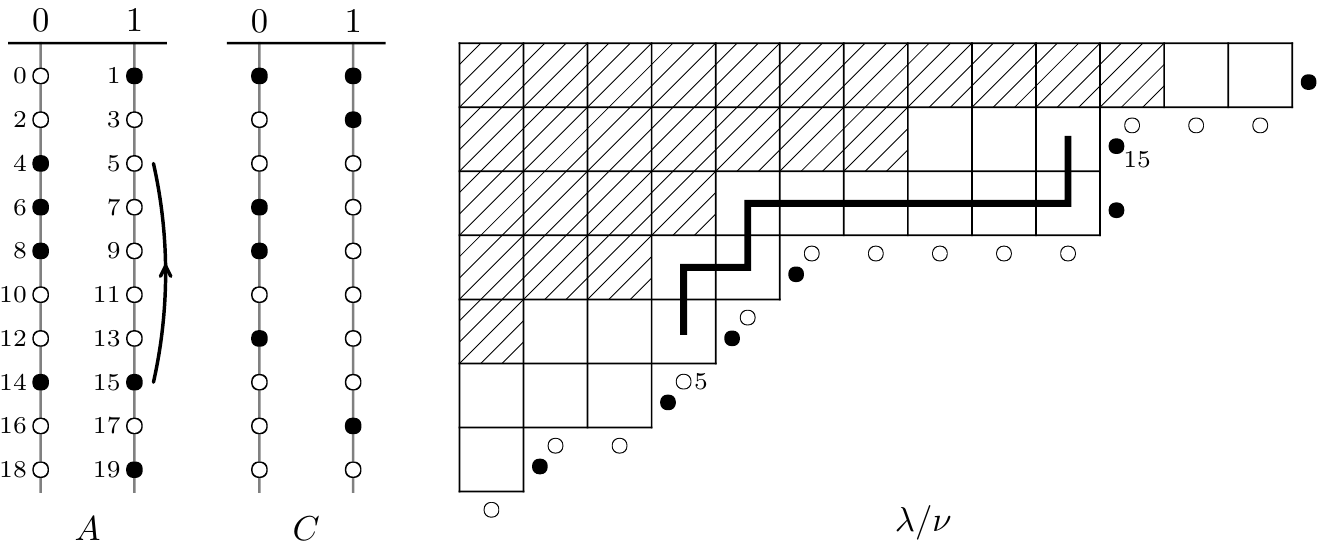}
\end{center}
\caption{\small The $2$-decomposable skew-partition $\lambda / \nu$ where
$\lambda = (13,10,10,5,4,3,1)$ and $\nu = (11,7,4,3,1)$. The normalized abacus $A$
for $\lambda$ and an abacus $C$ for $\nu$ are shown on two runners. The
marked $10$ border-strip of height $3$ has top-right box $(2,10)$ and bottom-left box
$(5,4)$; it
 corresponds to the bead in position $15$. Swapping this bead with the gap in position $5$
removes this border-strip, giving  $\mu = (13,9,4,3,3,3,1)$.
In the walk along
the rim of $\mu$, step $5$ is up $\bb$, rather that right $\wb$; the walk then agrees with that for $\lambda$
until step $15$, which is right $\wb$, rather than up $\bb$.}
%be obtained from $\lambda$ (or $\mu$) by repeated removals of $2$ border-strips.} %For example}
\end{figure}

%\enlargethispage{44pt}
Clearly $\lambda$ is determined by
the sequence of right and up steps that starts at $(p,1)$, 
visits exactly the boxes in the rim of $\lambda$, and finishes at $(1,\lambda_1)$. Encoding each right step by a \emph{gap}, denoted $\wb$, and each up step by a \emph{bead}, denoted $\bb$,
we obtain the \emph{normalized abacus} for $\lambda$. (This term is not entirely standard, but is convenient here.)
More generally, an \emph{abacus} for $\lambda$ is a sequence 
consisting of any number of beads, followed by the normalized abacus for $\lambda$, followed by any number of gaps. We number the positions in such a sequence
from $0$.

In the proof of~\eqref{eq:main} we shall only be concerned with border-strips whose length
is a multiple of a fixed $r \in \N$. 
In this case it is useful to represent
the gaps and beads on $r$ \emph{runners}, so that for each $t \in \{0,1,\ldots, r-1\}$ the positions
on runner $t$ are $t, t+r, t+2r, \ldots $. We say that position $t+jr$ is \emph{above}
position $t+j'r$ if $j < j'$. Define a \emph{single-step} bead move 
to be a move of a bead from a position $\beta$ to the position $\beta - r$
immediately above it. % on its runner.
These definitions are illustrated in Figure 1 on the previous page.

The following two lemmas 
records some basic results on the abacus.

\begin{lemma}\label{lemma:basic}
Let $A$ be an abacus for the partition $\lambda$. Let $s \in \N$.
Let $\mathcal{A}_s$ be the set of bead positions $\beta$ of $A$ such
that $\beta - s \ge 0$ and $A$ has a gap in position $\beta -s$. 

\begin{thmlist}
\item 
The map sending $\beta \in \mathcal{A}_s$ to the corresponding box in $[\lambda]$
is a bijection between $\mathcal{A}_s$ and the top-right boxes in the $s$ border-strips in $\lambda$.

\item Let $\beta \in \mathcal{A}_s$ and let $B$ be the abacus obtained from 
$A$ by swapping the bead in position $\beta$
with the gap in position $\beta - s$. Then $B$ is an abacus for the partition 
obtained by removing the $s$ border-strip
from $\lambda$ corresponding to $\beta$.

\item
The height of the $s$ border-strip in $\lambda$ corresponding to the bead 
in position $\beta \in \mathcal{A}_s$ is
the number of beads in positions $\beta-s+1, \ldots, \beta-1$ of~$A$.
\end{thmlist}
\end{lemma}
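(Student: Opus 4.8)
All three parts express a single fact --- that a bead--gap swap on the abacus mirrors the removal of a border-strip --- so the plan is to set up this dictionary once and then read each part off it. First I would dispose of the padding. A general abacus for $\lambda$ is its normalized abacus with some beads prepended in the lowest positions and some gaps appended in the highest positions. An appended gap lies above every bead and so is invisible to $\mathcal{A}_s$; a prepended bead lies below every bead, hence has no gap $s$ places beneath it and never belongs to $\mathcal{A}_s$, and whenever $\beta\in\mathcal{A}_s$ the gap in position $\beta-s$ sits above all prepended beads, so none of them lies strictly between $\beta-s$ and $\beta$ either. Thus the padding affects none of (i)--(iii), and I may take $A$ normalized. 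I would then record the dictionary with the rim walk: position $k$ is the $k$-th step of the walk from $(p,1)$ to $(1,\lambda_1)$, beads $\bb$ being up-steps and gaps $\wb$ right-steps. Counting the steps that precede the up-step forming the right-hand edge of the box $(i,\lambda_i)$ shows that the bead for row $i$ occupies position $\lambda_i+(p-i)$; this box $(i,\lambda_i)$ is the one assigned to $\beta=\lambda_i+p-i$ by the map in (i).

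The crux, and the step I expect to be the main obstacle, is to identify the swap with a border-strip removal. Given $\beta=\lambda_i+p-i\in\mathcal{A}_s$, let $\ell$ be the number of beads in positions $\beta-s+1,\dots,\beta-1$, say at positions $b_1>\dots>b_\ell$. I would compute the partition $\mu$ encoded by the swapped abacus $B$ directly from its bead set $(B\setminus\{\beta\})\cup\{\beta-s\}$: since $\beta-s<b_\ell$, the intermediate beads each drop one row and the moved bead becomes the new row $i+\ell$, giving $\mu_k=\lambda_k$ for $k<i$ and $k>i+\ell$, together with $\mu_{i+j}=b_{j+1}-(p-i-j)$ for $0\le j<\ell$ and $\mu_{i+\ell}=\lambda_i-s+\ell$. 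These values satisfy $\mu_{i+j}=\lambda_{i+j+1}-1$ for each $j<\ell$, which is exactly the condition that consecutive rows of $\lambda/\mu$ overlap in a single column, i.e. that $\lambda/\mu$ be a connected border-strip; and $|\lambda|-|\mu|=s$ because moving a bead down $s$ places lowers the encoded size by $s$. Hence $B$ is an abacus for $\mu=\lambda$ with an $s$-border-strip removed, proving (ii), and the top-right box of that strip is $(i,\lambda_i)$. Running the computation backwards sends any $s$-border-strip, whose top-right box is necessarily the last box $(i',\lambda_{i'})$ of its row, to a bead in $\mathcal{A}_s$; together with injectivity (distinct beads lie in distinct rows) this gives the bijection of (i). The genuine work here is the bookkeeping that turns the bead set into the displayed parts and the verification of the border-strip equalities.

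Finally, (iii) falls out of the same computation: the strip $\lambda/\mu$ occupies exactly rows $i,i+1,\dots,i+\ell$, so its bottom-left box lies in row $i+\ell$ and its top-right box in row $i$, whence its height is $(i+\ell)-i=\ell$, the number of beads in positions $\beta-s+1,\dots,\beta-1$.
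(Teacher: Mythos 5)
Your proof is correct, and it rests on the same dictionary the paper uses: bead positions are the $\beta$-numbers $\lambda_i + p - i$, and moving a bead down $s$ positions into a gap removes an $s$ border-strip. The packaging differs, though. The paper disposes of (i) and (ii) by citation --- they are Lemma 2.7.13 of James--Kerber --- and argues only (iii) directly, observing that the beads in positions $\beta-s+1,\ldots,\beta$ are exactly the up-steps made while walking the strip. You instead prove the cited lemma from scratch: you compute the partition $\mu$ read off the swapped abacus ($\mu_k=\lambda_{k+1}-1$ on the $\ell$ intermediate rows, $\mu_{i+\ell}=\lambda_i-s+\ell$ on the last), verify the ``consecutive rows overlap in exactly one column'' characterization of border-strips, and then read (i), (ii) and (iii) off this single computation --- in particular your (iii) comes from the row range of the strip rather than from counting up-steps, which is equivalent. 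What your route buys is self-containedness; what it costs is bookkeeping, and a complete write-up should include two checks you leave implicit: that $\mu$ is weakly decreasing, and that row $i+\ell$ of $\lambda/\mu$ is nonempty (equivalently $\lambda_{i+\ell} > \lambda_i - s + \ell$, which holds because the bead $b_\ell$ lies strictly between positions $\beta-s$ and $\beta$); the converse direction of (i) needs the symmetric inequalities to see that position $\beta-s$ really is a gap of $A$. One terminological caution: the paper declares the \emph{lower}-numbered position to be ``above,'' so your phrases ``an appended gap lies above every bead'' and ``the gap in position $\beta-s$ sits above all prepended beads'' use ``above'' in the opposite sense; the mathematics underneath is nevertheless right.
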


\begin{proof}
Parts (i) and (ii) follow from Lemma 2.7.13 in \cite{JK}, using that 
the set of bead positions
in $A$ is a set of $\beta$-numbers for $\lambda$. 
(An alternative proof,
avoiding $\beta$-numbers, is indicated in the caption to Figure~1.)
For~(iii), observe that the beads in positions
$\beta-s+1, \ldots, \beta$ of $A$ encode the steps up made when walking the $s$ border-strip
in $\lambda$ corresponding to $\beta$. 
%The height of this strip is therefore the number of beads
%in positions $\beta-s+1, \ldots, \beta-1$.
\end{proof}

\begin{lemma}\label{lemma:signs}
Let $\lambda = \mu^{(0)}, \mu^{(1)}, \ldots, \mu^{(c)} = \nu$ be a sequence of partitions
such that $\mu^{(i)} / \mu^{(i+1)}$ is an $s_i$ border-strip in $\mu^{(i)}$ for each $i \in \{0,1,\ldots, c-1\}$.
Let $A$ be an abacus for $\lambda$. Let $\mathcal{J}$ be the set of pairs
of positions %$2$-subsets
$\{ \beta,\beta' \}$ such that 
\begin{thmlist}
\item $\beta < \beta'$;
\item $A$ has beads $b$ and $b'$ in positions $\beta$ and $\beta'$, respectively;
\item after the sequence of bead moves that removes the border-strips
$\mu^{(0)}/\mu^{(1)}$, $\ldots$, $\mu^{(c-1)}/\mu^{(c)}$,  bead $b$ finishes 
in a greater numbered position than  bead~$b'$.
\end{thmlist}
Then $\sgn(\mu^{(0)}/\mu^{(1)}) \ldots \sgn(\mu^{(c-1)}/\mu^{(c)}) = (-1)^{|\mathcal{J}|}$.
\end{lemma}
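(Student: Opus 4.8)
The plan is to read the entire sequence of bead moves as a single permutation $\pi$ of the beads of $A$, and then to show that both sides of the claimed identity compute $\sgn(\pi)$. Since every bead move swaps a bead with a gap, the number of beads is preserved throughout; I label the beads $1, 2, \ldots, n$ in increasing order of their positions in $A$. After the moves that remove $\mu^{(0)}/\mu^{(1)}, \ldots, \mu^{(c-1)}/\mu^{(c)}$, each bead $i$ occupies a well-defined final position $f(i)$, and ranking the beads by increasing final position defines a permutation $\pi$ of $\{1, \ldots, n\}$, so that $\pi(i) < \pi(j)$ exactly when $f(i) < f(j)$.

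First I would match $\mathcal{J}$ to the inversions of $\pi$. If the beads $b, b'$ of conditions (i)--(iii) carry labels $i, j$, then $\beta < \beta'$ forces $i < j$, while condition (iii) says $f(i) > f(j)$, i.e.\ $\pi(i) > \pi(j)$; thus $\{\beta, \beta'\} \in \mathcal{J}$ precisely when $\{i,j\}$ is an inversion of $\pi$. Hence $|\mathcal{J}|$ is the number of inversions of $\pi$, and by the standard fact that the sign of a permutation is $(-1)$ raised to its number of inversions, $(-1)^{|\mathcal{J}|} = \sgn(\pi)$.

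Next I would evaluate $\sgn(\pi)$ by factoring it through the individual moves. By Lemma~\ref{lemma:basic}(ii) the $i$-th move carries the bead in some position $\beta$ of the (intermediate) abacus for $\mu^{(i)}$ to the gap in position $\beta - s_i$; as this bead travels down it crosses exactly the beads lying strictly between these positions, and by Lemma~\ref{lemma:basic}(iii) there are $\ell_i$ of them, where $\ell_i$ is the height of $\mu^{(i)}/\mu^{(i+1)}$. In the ranking of beads by position, this move is the cycle taking one bead past $\ell_i$ others, a product of $\ell_i$ adjacent transpositions, and so acts with sign $(-1)^{\ell_i} = \sgn(\mu^{(i)}/\mu^{(i+1)})$. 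As $\pi$ is the composite of the $c$ moves and $\sgn$ is a homomorphism, $\sgn(\pi) = \prod_{i=0}^{c-1} (-1)^{\ell_i} = \prod_{i=0}^{c-1} \sgn(\mu^{(i)}/\mu^{(i+1)})$, and combining this with the previous paragraph gives the result.

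The one point I would check explicitly, and the only place subtlety enters, is that the per-move crossing count is governed by Lemma~\ref{lemma:basic}(iii) applied to the intermediate abacus for $\mu^{(i)}$ and not to $A$ itself: the beads crossed in the $i$-th move depend on the configuration reached at that stage. This is harmless for the sign computation, since signs multiply irrespective of the intermediate configurations, but it is exactly what forces the number of beads crossed to equal the height $\ell_i$ at each step. Beyond this, the argument rests only on the multiplicativity of $\sgn$ and its expression as $(-1)$ to the number of inversions, so I expect no real obstacle.
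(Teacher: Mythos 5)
Your proof is correct, and it packages the argument differently from the paper. The paper proceeds by induction on $c$: writing $\mathcal{I}$ for the inversion set of the first $c-1$ removals, it uses Lemma~\ref{lemma:basic}(iii) to show that the last removal, of a strip of height $\ell$, turns $j$ pairs of $\mathcal{I}$ into non-inversions and creates $\ell - j$ new inversions, so $|\mathcal{J}| = |\mathcal{I}| + \ell - 2j$ and the parity shifts by $\ell$; all bookkeeping is done by hand relative to the starting abacus $A$. You instead make the permutation-theoretic structure explicit: label the beads by initial position, let $\pi$ be the permutation of rankings induced by the final positions, identify $|\mathcal{J}|$ with the inversion number of $\pi$, and factor $\pi$ as a composite of $c$ cycles, each of which (again by Lemma~\ref{lemma:basic}(iii), applied to the intermediate abacus) moves one bead past exactly $\ell_i$ others and so has sign $(-1)^{\ell_i}$; multiplicativity of $\sgn$ finishes the proof. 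The two arguments rest on the same kernel --- each removal of a height-$\ell$ strip flips the inversion parity by $\ell$ --- but yours outsources the counting to the standard facts that $\sgn$ is a homomorphism and equals $(-1)^{\mathrm{inv}}$, whereas the paper's induction re-derives exactly that bookkeeping in this special case. What the paper's version buys is self-containedness and a formulation (inversions measured against the fixed abacus $A$) that is reused verbatim in the proof of Proposition~\ref{prop:II}; what yours buys is conceptual clarity and no induction. The only point in your write-up that deserves to be made explicit is the claim that ``$\pi$ is the composite of the $c$ moves'': since each move's cycle acts on the \emph{current} ranking, one should note that the ranking permutations satisfy $\sigma_{i+1} = \rho_i \circ \sigma_i$ with $\sigma_0$ the identity, so that $\pi = \rho_{c-1} \cdots \rho_0$; this is immediate, but it is the step that justifies composing the per-move signs.
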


%\enlargethispage{18pt}
\begin{proof}
We work by induction on $c$. The base case $c=0$ is trivial. Let $\mathcal{I}$ be the
set defined in the same way as $\mathcal{J}$ for the 
sequence $\lambda = \mu^{(0)}$, $\mu^{(1)}$, $\ldots$,
$\mu^{(c-1)} = \mu$. Let $B$ be the abacus for $\mu$ obtained
from $A$ by the sequence of bead moves specified in (iii), stopping at $\mu$. 
Suppose that the border-strip $\mu / \nu$ corresponds
to the bead in position~$\gamma$ of $B$, and that his bead was in position $\beta$ of $A$.
Let $\ell$ be the height of $\mu / \nu$.
By Lemma~\ref{lemma:basic}(iii) there are $\ell$ beads in positions $\gamma-s_c+1, \ldots, \gamma-1$
of~$B$. Suppose that exactly~$j$ of these beads were originally in a position $\beta' > \beta$ of $A$.
These $j$ beads correspond to pairs $\{ \beta, \beta' \} \in \mathcal{I} \backslash \mathcal{J}$
and the remaining $\ell-j$ beads correspond to pairs $\{ \beta, \beta' \} \in \mathcal{J} \backslash \mathcal{I}$.
Apart from these pairs, the sets $\mathcal{I}$ and $\mathcal{J}$ agree. Thus
%\[ 
$|\mathcal{J}| = |\mathcal{I}| - j + (\ell-j) = |\mathcal{I}| + \ell - 2j$. % \]
Hence, by induction,
\begin{align*} (-1)^{|\mathcal{J}|} &= (-1)^{|\mathcal{I}|} (-1)^\ell \\ &= 
\sgn(\mu^{(0)}/\mu^{(1)}) \ldots \sgn(\mu^{(c-2)}/\mu^{(c-1)}) \sgn(\mu^{(c-1)}/\mu^{(c)})
\end{align*}
as required.
\end{proof}

%It will be convenient to define a \emph{single-step}
%bead move on an abacus presented on $r$ runners 
%to be a move of a bead from a position $\beta$ to the position
%$\beta -r$ immediately above it on its runner.

For the remainder of this section, fix $r$, $m \in \N$ and let $\lambda/\nu$ be a skew-partition of $rm$ such that $\nu$ can be
obtained from $\lambda$ by repeatedly removing~$r$ border-strips.
Let $A$ be an $r$-runner abacus for $\lambda$ 
and let $C$ be the abacus for $\nu$ obtained from $A$ by a sequence of bead moves
that removes these $r$ border-strips. % on $A$.

%that removes these $r$ border-strips.

Using Lemma~\ref{lemma:signs} we obtain the following proposition, which is equivalent
to \cite[2.7.26]{JK}. 

\begin{proposition}\label{prop:anyorder}
Let $\lambda / \nu$ and $A$ be as just defined.
There exists $\sigma \in \{1, -1\}$ such that 
if $\mu^{(0)}$, $\mu^{(1)}$, $\ldots$, $\mu^{(m)}$
is any sequence of partitions such that $\mu^{(0)} = \lambda$, $\mu^{(m)} = \nu$ 
and $\mu^{(i)}/\mu^{(i+1)}$ is an $r$ border-strip
in $\mu^{(i)}$ for each $i \in \{0,1,\ldots, m-1\}$, then
\[ \sigma = \sgn(\mu^{(0)}/\mu^{(1)}) \ldots \sgn(\mu^{(m-1)} / \mu^{(m)}).
\]
\end{proposition}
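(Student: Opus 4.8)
The plan is to apply Lemma~\ref{lemma:signs} with $c = m$ and $s_0 = \cdots = s_{m-1} = r$, and then to show that the resulting set $\mathcal{J}$ of pairs does not depend on the chosen sequence $\mu^{(0)}, \ldots, \mu^{(m)}$. Since Lemma~\ref{lemma:signs} gives $\sgn(\mu^{(0)}/\mu^{(1)}) \cdots \sgn(\mu^{(m-1)}/\mu^{(m)}) = (-1)^{|\mathcal{J}|}$, it then suffices to set $\sigma = (-1)^{|\mathcal{J}|}$.

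First I would observe that, by Lemma~\ref{lemma:basic}(ii) applied with $s = r$, removing an $r$ border-strip corresponds to swapping a bead in some position $\beta$ with a gap in position $\beta - r$; since $\beta - r$ is the position immediately above $\beta$ on its runner, this is precisely a single-step bead move. Thus any admissible sequence $\mu^{(0)}, \ldots, \mu^{(m)}$ translates into a sequence of $m$ single-step moves carrying the abacus $A$ to an abacus for $\nu$. Because each single-step move keeps a bead on its runner, the number of beads on each runner is invariant; and because a bead can move only into the empty position directly above it, two beads on the same runner can never exchange their relative order. Consequently the beads on each runner retain their top-to-bottom order throughout the whole process.

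Next I would use that an abacus for $\nu$ with a prescribed number of beads is unique. Every admissible sequence preserves the total number of beads (namely the number in $A$) and terminates at $\nu$, so every admissible sequence ends at one and the same abacus $C$ for $\nu$. On each runner, $A$ and $C$ have the same number of beads, say occupying positions $\alpha_1 < \cdots < \alpha_k$ in $A$ and $\gamma_1 < \cdots < \gamma_k$ in $C$; by the order-preservation just noted, the bead starting at $\alpha_i$ must finish at $\gamma_i$. Hence the final position of each individual bead is determined by $A$ and $\nu$ alone, independently of the order in which the border-strips are removed.

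Finally, condition~(iii) in the definition of $\mathcal{J}$ refers only to the final positions of the beads $b$ and $b'$, so the preceding paragraph shows that $\mathcal{J}$ is the same set for every admissible sequence. Therefore $(-1)^{|\mathcal{J}|}$ is a single sign depending only on $\lambda/\nu$ and $A$, and taking $\sigma = (-1)^{|\mathcal{J}|}$ completes the argument. I expect the only point needing care to be the order-preservation claim — that beads on a common runner cannot overtake one another under single-step moves — since everything else is bookkeeping resting upon it; but this follows at once from the fact that a single-step move sends a bead into the empty slot immediately above it, a slot that can be created only after the bead previously occupying it has itself moved up.
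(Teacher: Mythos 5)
Your proof is correct and follows essentially the same route as the paper: translate each $r$ border-strip removal into a single-step bead move via Lemma~\ref{lemma:basic}(ii), observe that the final position of every bead is independent of the order of the moves, and then apply Lemma~\ref{lemma:signs} to get $\sigma = (-1)^{|\mathcal{J}|}$. The paper states the order-independence of the final bead positions without justification, whereas you supply the supporting details (beads stay on their runners, cannot overtake one another, and the target abacus $C$ is uniquely determined), which is a welcome elaboration rather than a different argument.
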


\begin{proof}
The sequence $\mu^{(0)}$, 
$\mu^{(1)}$, $\ldots$, $\mu^{(m)}$ corresponds to a sequence of single-step bead moves
on $A$ leading to the abacus $C$.
%in which each bead is pushed one step up its runner.
Since the final positions of the beads moved on $A$ are independent
of the order of moves, the result
follows from Lemma~\ref{lemma:signs}.
\end{proof}

An immediate corollary of Proposition~\ref{prop:anyorder} is that $\sgn_r(\lambda/\nu)$ 
can be computed by removing $r$ border-strips in any way.
We use this corollary in the proof of
Proposition~\ref{prop:I}  below.

We end this section with a  characterization of $r$-decomposable
partitions using the abacus.

\begin{definition}\label{defn:rdec} Let the abaci $A$ and $C$ be as defined.
Let $t \in \{0,\ldots, r-1\}$.
We say that runner $t$ of $A$ is \emph{$r$-decomposable} if  it has positions
$\alpha_1 < \beta_1  < \cdots < \alpha_c < \beta_c$
such that, for each $k \in \{1,\ldots, c\}$,
position $\beta_k$ has a bead, positions $\alpha_k, \alpha_k+r,\ldots, \beta_k-r$
have gaps, and runner $t$ of $C$ is obtained by 
moving the bead in position~$\beta_k$ to the gap
in position $\alpha_k$.
\end{definition}

%We now obtain the following characterization of $r$-decomposable partitions.

\begin{lemma}\label{lemma:dec}
Let the skew-partition $\lambda/\nu$ and the abacus $A$ be as defined.

\begin{thmlist}
\item 
Let $\beta$ be the greatest numbered position of $A$ that has a bead moved in a sequence of bead moves
leading to $\nu$. Then $\lambda / \nu$ has a final $r$ border-strip if and only if
$A$ has a gap in position $\beta - r$.

\item The skew-partition $\lambda / \nu$ is $r$-decomposable if and only if runner $t$ of $A$
is $r$-decomposable for each $t \in \{0,\ldots, r-1\}$.
\end{thmlist}
\end{lemma}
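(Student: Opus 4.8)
The plan is to prove each part by reading off the structure directly from the abacus, using Lemma~\ref{lemma:basic} and Proposition~\ref{prop:anyorder} to reduce the combinatorial claims about border-strips to statements about bead positions.

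For part~(i), I would first use Proposition~\ref{prop:anyorder} to observe that the greatest numbered position $\beta$ carrying a bead that must be moved is well defined, independent of the order in which border-strips are removed: whichever sequence of single-step moves is used to pass from $A$ to $C$, the multiset of beads moved and their net displacements are fixed, so the highest-numbered \emph{source} position $\beta$ is an invariant of the pair $(A,C)$. The box of $[\lambda]$ corresponding to the bead in position $\beta$ is precisely the top-right box of the rim boxes of $\lambda/\nu$, so removing the final $r$ border-strip in $\lambda/\nu$ is by definition the removal of an $r$ border-strip whose top-right box is this one. By Lemma~\ref{lemma:basic}(i) and~(ii), such a border-strip exists and its removal leaves a partition exactly when $A$ has a gap in position $\beta-r$; and when it exists, the resulting abacus still lies above $C$ (the swap only decreases a bead position by $r$, and $\beta$ was the maximal moved source), so $\mu/\nu$ remains a skew-partition. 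This gives both directions of the `if and only if', and matches the definition of \emph{final} $r$ border-strip: the box in row $d$ (minimal with $\lambda_d>\nu_d$) is decreased.

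For part~(ii), the forward direction is the main point. Suppose $\lambda/\nu$ is $r$-decomposable, so there is a chain $\lambda=\mu^{(0)}\supset\cdots\supset\mu^{(m)}=\nu$ in which each $\mu^{(i)}/\mu^{(i+1)}$ is the \emph{final} $r$ border-strip in $\mu^{(i)}/\nu$. I would translate each such removal into a single-step bead move on the abacus via Lemma~\ref{lemma:basic}(ii), and use part~(i) repeatedly: at each stage the bead moved is the one in the greatest numbered moved position, and it is moved into the gap immediately above it (the gap in position $\beta-r$ guaranteed by~(i)). Tracking these moves runner by runner, on each runner $t$ the moved beads are swapped into the nearest available gaps above them, and the fact that we always take the \emph{greatest} available bead forces the gaps $\alpha_k,\alpha_k+r,\ldots,\beta_k-r$ between a source $\beta_k$ and its target $\alpha_k$ to be genuine gaps in $A$. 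This produces exactly the data $\alpha_1<\beta_1<\cdots<\alpha_c<\beta_c$ of Definition~\ref{defn:rdec}, so every runner of $A$ is $r$-decomposable.

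Conversely, if each runner of $A$ is $r$-decomposable in the sense of Definition~\ref{defn:rdec}, I would build the required chain by repeatedly applying part~(i): the datum on each runner prescribes, for the globally greatest moved bead position, a gap immediately above into which it slides, so part~(i) certifies the existence of a final $r$ border-strip at each step, and iterating exhausts all moved beads after finitely many steps, reaching $\nu$. The main obstacle is the bookkeeping in the forward direction of~(ii): one must verify that choosing the top-right (greatest-position) border-strip at every stage really does sort the beads on each runner into the disjoint, properly nested blocks $\alpha_k<\beta_k$ demanded by Definition~\ref{defn:rdec}, with \emph{all} intermediate positions $\alpha_k,\ldots,\beta_k-r$ being gaps of $A$ rather than merely gaps at the moment of the move. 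This is where the independence-of-order statement from Proposition~\ref{prop:anyorder}, together with the maximality built into part~(i), does the essential work, since it lets me assume without loss of generality that the beads are moved in order of decreasing source position.
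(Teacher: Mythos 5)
Your overall architecture is the same as the paper's: part (i) is proved by identifying the bead in position $\beta$ with the box $(d,\lambda_d)$, where $d$ is minimal with $\lambda_d>\nu_d$, and quoting Lemma~\ref{lemma:basic}; part (ii) is obtained by iterating (i) runner by runner (the paper's entire proof of (ii) is the sentence ``Part (ii) now follows by repeated applications of (i)''). Your part (i) is correct, and in fact more careful than the paper's, since you supply the check that $\mu/\nu$ remains a skew-partition; your converse direction of (ii) is also fine as sketched.

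The genuine gap is in the forward direction of (ii), at exactly the point you flag as ``the main obstacle''. One must show that the sources and targets on each runner interleave as $\alpha_1<\beta_1<\cdots<\alpha_c<\beta_c$, with the positions $\alpha_k,\alpha_k+r,\ldots,\beta_k-r$ being gaps of $A$ itself; your proposed justification is that Proposition~\ref{prop:anyorder} ``lets me assume without loss of generality that the beads are moved in order of decreasing source position''. This step fails. Proposition~\ref{prop:anyorder} asserts only that the \emph{sign} is independent of the order of removals; it gives no licence to reorder a sequence of removals while preserving the property that each strip removed is the \emph{final} one, and $r$-decomposability is by definition a statement about that single canonical order. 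If order could be disregarded, then every $\lambda/\nu$ for which $\nu$ is reachable from $\lambda$ by removing $r$ border-strips in \emph{some} order would be $r$-decomposable; this is false, and Figure~2 ($r=2$) is a counterexample: on runner $0$ the abacus $C$ is reachable from $A$ by single-step moves made in increasing order of source position, yet the final-strip (decreasing) order gets stuck, which is exactly why that runner has type (II) rather than type (I) --- the distinction on which Proposition~\ref{prop:II} and the whole cancellation rest, so an argument that erases it proves too much. What is actually needed is different and uses no reordering: in the forward direction the final-strip chain is \emph{given}, and the maximality in (i), together with the fact that positions on one runner differ by multiples of $r$, already forces each runner's pending beads to move in decreasing order of source, each travelling all the way to its target before the next starts. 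The missing deduction is the interleaving: if $\alpha_{k+1}<\beta_k$, then $\alpha_{k+1}$ is congruent to $\beta_k$ modulo $r$ and lies among $\alpha_k,\ldots,\beta_k-r$, hence on the path of the bead starting at $\beta_k$; when that bead reaches position $\alpha_{k+1}+r$, the position above it is permanently occupied by the bead that has already landed at $\alpha_{k+1}$, so by (i) no final $r$ border-strip exists and the chain cannot reach $\nu$, contradicting the hypothesis. Only with this contradiction in hand do all traversed positions become gaps of $A$ itself, yielding the data of Definition~\ref{defn:rdec}.
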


\begin{proof}
Let $d$ be minimal such that $\lambda_d > \nu_d$.
The bead in position $\beta$ corresponds to the box in position $(d,\lambda_d)$ of $[\lambda]$. By Lemma~\ref{lemma:basic}(i),
this box is the top-right box in an $r$ border-strip in $\lambda$ 
if and only if there is a gap in position $\beta - r$ of $A$. This proves~(i).
Part (ii) now follows by repeated applications of (i).
\end{proof}

%Finally we show that $\sgn_r$ of an $r$-decomposable partition can be computed
%by removing border-strips in any order.
%
%\begin{proposition}\label{prop:anyorder}
%Let $\lambda / \nu$ and the abacus $A$ be as defined.
%Suppose that $\lambda / \nu$ is
%$r$-decomposable. Let $\lambda = \mu^{(0)}, \mu^{(1)}, \ldots, \mu^{(m)} = \nu$
%be a sequence of partitions such that $\mu^{(i)}/\mu^{(i+1)}$ is an $r$ border-strip
%in $\mu^{(i)}$ for each $i \in \{0,1,\ldots, c-1\}$. Then
%\[ \sgn_r(\lambda / \nu) = \sgn(\mu^{(0)}/\mu^{(1)}) \ldots \sgn(\mu^{(m-1)} / \mu^{(m)}).
%\]
%\end{proposition}
%
%\begin{proof}
%The sequence $\mu^{(0)}, \mu^{(1)}, \ldots, \mu^{(m)}$ corresponds
%to a sequence of bead moves on $A$ (presented on $r$ runners) 
%in which on each move, a bead is pushed one step up on its runner.
%The final positions of the beads
%are  the same whatever order they are moved in. The 
%lemma therefore follows from Lemma~\ref{lemma:signs}.
%\end{proof}

%We note that
%Proposition~\ref{prop:anyorder} also follows from (2.7.26) in \cite{JK},
%and that this result
%may be proved, in the same way as Proposition~\ref{prop:anyorder},
% using Lemma~\ref{lemma:signs}.

The skew-partition $\lambda / \nu$ shown in Figure~1 is $2$-decomposable. It 
may be used to give an example of
Proposition~\ref{prop:anyorder} and Lemma~\ref{lemma:dec}.

\section{Proof of Equation~\eqref{eq:main}}

The proof is by induction on $m$. 
We begin with the  identity
\begin{equation}
\label{eq:Newton} mh_m = \sum_{\ell=1}^m p_\ell h_{m-\ell},
\end{equation}
which may be proved in a few lines working from the generating functions
$\sum_{m=0}^\infty h_mt^m = \prod_{i=1}^\infty (1-x_it)^{-1}$ and 
$\sum_{\ell=1}^\infty p_\ell t^\ell = \sum_{i=1}^\infty x_it(1-x_it)^{-1}$,
or found in \cite[I, Equation (2.11)]{MacDonald}.
The map $f \mapsto p_r \circ f$ is an endomorphism of the
ring of symmetric functions  (see \cite[I.8.6]{MacDonald}), so \eqref{eq:Newton} implies that
\begin{equation*} %\begin{split} 
p_r \circ mh_m = p_r  \circ  \sum_{\ell=1}^m  p_\ell h_{m-\ell}  
= \sum_{\ell=1}^m (p_r \circ p_\ell)(p_r \circ h_{m-\ell}) 
= \sum_{\ell=1}^m p_{r\ell} (p_r \circ h_{m-\ell}).
%\end{split} 
\end{equation*}
Since $p_r \circ mh_m = mp_r \circ h_m$, it follows that
\begin{equation*} 
ms_\nu (p_r \circ h_m) = \sum_{\ell=1}^m s_\nu    (p_r \circ h_{m-\ell})p_{r \ell} .
\end{equation*}
By~\eqref{eq:MN} and induction we get
\begin{equation*}
ms_\nu (p_r \circ h_m) = \sum_{\ell=1}^m 
 \,\sum_{\mu \vvdash r(m- \ell) + |\nu|}\, \sum_{\lambda \vvdash r\ell + |\mu|}\,
\sgn_r(\mu / \nu) \sgn(\lambda/\mu) s_\lambda .
\end{equation*}
It is therefore sufficient to prove that if $\lambda / \nu$ is a skew-partition of $rm$ then
\begin{equation}
\label{eq:sgn} 
m\sgn_r(\lambda / \nu)   = \sum_\mu   \sgn (\lambda / \mu) \sgn_r(\mu / \nu)
\end{equation}
where the sum is over all partitions $\mu$ such that $\lambda / \mu$ is a border-strip
of length divisible by $r$ and $\mu/\nu$ is a skew-partition.

% $[\lambda] \supset [\mu] \supseteq [\nu]$ and $|\lambda/\mu|$ is a border-strip
%of length divisible by $r$.

Fix an $r$-runner abacus $A$ for $\lambda$.
% with positions numbered
%as in Section~2.
%
% numbered so that
%the positions on runner $t$ are $\{ j \in \N_0 : \text{$j \equiv t$ mod $r$}\}$ for each $t \in \{0,1,\ldots, r-1\}$.
We may assume that one side of~\eqref{eq:sgn} is non-zero, and so
an abacus $C$ for $\nu$ 
can be obtained by a  sequence of bead moves on the runners of $A$.
We say that a runner of $A$ is of \emph{type}

\begin{enumerate}
\item[(I)] if it is $r$-decomposable (see Definition~\ref{defn:rdec});
%has positions
%$\beta_1 > \alpha_1 > \beta_2 > \alpha_2 > \cdots > \beta_c > \alpha_c$
%such that, for each $k \in \{1,\ldots, c\}$,
%position $\beta_k$ has a bead, positions $\beta_k-p,\ldots, \alpha_k$
%have gaps, and  runner $t$ of $C$ is obtained by 
%moving the bead in position~$\beta_k$ to the gap
%in position $\alpha_k$ for each $k$; 

\item[(II)] if it is not $r$-decomposable but an $r$-decomposable runner can be obtained
by swapping a bead on this runner with a gap one or more positions above~it;

\item[(III)] if it is neither of type (I) nor of type (II).
\end{enumerate}
For example, in Figure~2 after the proof of 
Proposition~\ref{prop:II}, runner $0$ of $A$ 
has type~(II) and runner $1$ has type (I).

%Runners of types (I) and (II) are illustrated in Figure~2 above. % which also gives an example of the end of the proof.

%In abacus~$A$, which shows $\lambda$, runner~$0$ has type~(I) and runner~$1$ has type (II). Abacus~$B$, which shows
%$\mu = (9,7,4,4,4,1,1)$, is obtained by swapping the bead $d$ in position $15$ with the gap in position $3$ on $A$.
%Abacus~$C$ shows $\nu$. The bead move from $A$ to $B$ and the bead moves from $B$ to $C$ are shown by arrows.
%The border-strip removed from $\lambda$ to obtain the $2$-decomposable skew-partition $\mu/\nu$
%is indicated in the Young diagram of $\lambda$. We have $P = \{(d,1),  (d^\star,1), (d,3), (d^\star,3)\}$,
%$\delta = 15$, $\delta^\star = 13$, and $\alpha = 11$.} %For the indicated bead move $\gamma = 3$.}

By Lemma~\ref{lemma:basic}(ii),
swapping a bead and a gap as described in (II) corresponds to removing a border-strip
of length divisible by $r$ from $\lambda$ to leave a partition $\mu$. 
The corresponding contribution of $\sgn(\lambda/\mu)\sgn_r(\mu/\nu)$ 
to the right-hand side of~\eqref{eq:sgn} is non-zero if
and only if $\mu/\nu$ is an $r$-decomposable skew-partition. Hence
if~$A$ has a runner of type (III) or two or more runners
of type (II), then both sides of~\eqref{eq:sgn} are zero. The following
two propositions deal with the remaining cases. In both cases an
example is given following the proof.

\begin{proposition}\label{prop:I}
If all runners of $A$ have type \emph{(I)} then~\eqref{eq:sgn} holds.
\end{proposition}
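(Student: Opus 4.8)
The plan is to prove \eqref{eq:sgn} in the case where every runner is $r$-decomposable by establishing a bijective/counting argument on the abacus $A$. Since all runners have type (I), Lemma~\ref{lemma:dec}(ii) tells us that $\lambda/\nu$ is itself $r$-decomposable, so $\sgn_r(\lambda/\nu) \neq 0$; thus the left-hand side of \eqref{eq:sgn} equals $m \cdot \sgn_r(\lambda/\nu)$ with a nonzero sign. The task is to show the right-hand side sums to exactly $m$ copies of this same sign.

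**First I would** analyze which partitions $\mu$ contribute to the right-hand sum. A border-strip $\lambda/\mu$ of length divisible by $r$ corresponds, via Lemma~\ref{lemma:basic}(ii), to a single bead-swap on some runner $t$ of $A$, moving a bead down-to-up by some multiple of $r$. For $\sgn_r(\mu/\nu)$ to be nonzero, the resulting $\mu/\nu$ must remain $r$-decomposable, which by Lemma~\ref{lemma:dec}(ii) forces each runner of the abacus for $\mu$ to be $r$-decomposable. Using the explicit structure in Definition~\ref{defn:rdec}, on each $r$-decomposable runner $t$ the beads are organized into $c_t$ blocks, each block moving from position $\beta_k$ up to a gap at $\alpha_k$. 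The border-strips $\lambda/\mu$ that keep $r$-decomposability are precisely those that correspond to collapsing one full block on one runner in a single step, so the admissible $\mu$ are indexed by the blocks across all runners; the total number of such blocks is $\sum_t c_t$.

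**The key step** is then to count blocks and to verify the signs multiply correctly. The number of single-step bead moves needed to carry $A$ to $C$ is $m$ (since $\lambda/\nu$ has size $rm$ and each single-step move removes an $r$-border-strip). On each runner, a block that travels from $\beta_k$ to $\alpha_k$ accounts for $(\beta_k - \alpha_k)/r$ single-step moves; and collapsing that block in one border-strip move and then decomposing the rest is one of the $m$ ways to order the removals. By Proposition~\ref{prop:anyorder}, every sequence of $m$ single-step $r$-border-strip removals from $\lambda$ to $\nu$ yields the same product of signs, namely $\sigma = \sgn_r(\lambda/\nu)$. Each admissible $\mu$ corresponds to choosing which single-step move to perform first (grouped into a maximal border-strip), and $\sgn(\lambda/\mu)\,\sgn_r(\mu/\nu) = \sigma$ for every such choice. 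Hence the right-hand side is $\sigma$ times the number of admissible first-moves, which equals $m$.

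**The main obstacle** I anticipate is the bookkeeping that identifies admissible $\mu$ with exactly the $m$ single-step moves: I must show that \emph{every} valid border-strip removal $\lambda/\mu$ (of $r$-divisible length leaving an $r$-decomposable $\mu/\nu$) arises from collapsing the top portion of a single block on a single runner, and conversely that each such collapse gives a valid $\mu$; and that the count of these is precisely $m$ rather than the coarser block-count $\sum_t c_t$. The resolution is that a single border-strip can span multiple single-steps within one block, so each $\mu$ corresponds to choosing one block \emph{and} how far up within it to stop — but $r$-decomposability of the remainder forces the move to collapse the entire block at once, pinning down the correspondence. Carefully invoking Lemma~\ref{lemma:dec}(i) to characterize the \emph{final} removable strip on each runner, together with Proposition~\ref{prop:anyorder} for sign-invariance, will close the argument.
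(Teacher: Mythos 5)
Your overall strategy---describe the runners via Lemma~\ref{lemma:dec}(ii), identify candidate $\mu$ with bead moves via Lemma~\ref{lemma:basic}(ii), use Proposition~\ref{prop:anyorder} for sign-invariance, and count the non-zero summands---is the same as the paper's, but your key counting step contains a genuine error, and the ``resolution'' in your final paragraph asserts the exact opposite of the fact the proof needs. You claim the $\mu$ giving non-zero summands are precisely those obtained by collapsing one \emph{full} block, i.e.\ moving the bead in position $\beta_k$ all the way to $\alpha_k$, so that they are indexed by the $\sum_t c_t$ blocks; and you then justify this by saying that ``$r$-decomposability of the remainder forces the move to collapse the entire block at once.'' This is false. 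If the bead in position $\beta_k$ is moved up by only $q$ single steps, where $1 \le q < (\beta_k-\alpha_k)/r$, the resulting runner is still $r$-decomposable: the bead now in position $\beta_k - rq$ still has gaps in positions $\alpha_k, \alpha_k+r, \ldots, \beta_k - rq - r$ above it, so it can be carried on to $\alpha_k$, and all other blocks are untouched. Hence every partial move also gives a non-zero summand. The correct count on runner $t$ is therefore $(\beta_1-\alpha_1)/r + \cdots + (\beta_c-\alpha_c)/r$, one summand for each choice of block \emph{and} stopping position; summing over runners gives exactly $m$, because $|\lambda/\nu| = rm$ and each single step accounts for $r$ boxes. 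This is precisely where the factor $m$ on the left-hand side of~\eqref{eq:sgn} comes from, and it is the crux of the paper's proof.

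With your characterization the right-hand side of~\eqref{eq:sgn} would be $\sgn_r(\lambda/\nu)\sum_t c_t$, which is generally not $m\,\sgn_r(\lambda/\nu)$. A minimal counterexample: take $r=1$, $\lambda = (2)$, $\nu = \emptyset$, so $m=2$; there is one runner with one block in which the bead moves two steps. The partition $\mu = (1)$ arises from a partial move and contributes $\sgn\bigl((2)/(1)\bigr)\sgn_1\bigl((1)/\emptyset\bigr) = 1$, a non-zero summand; your rule would discard it and yield right-hand side $1 \ne 2$. So the anticipated obstacle must be resolved in the opposite direction: what you need to prove is that partial moves \emph{are} admissible (an easy check against Definition~\ref{defn:rdec}), after which the summands biject with the $m$ single-step moves. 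One further small caution: the strip $\lambda/\mu$ has length $rq$, not $r$, so Proposition~\ref{prop:anyorder} as stated does not directly give $\sgn(\lambda/\mu)\sgn_r(\mu/\nu) = \sgn_r(\lambda/\nu)$; you need the more general Lemma~\ref{lemma:signs} (which allows strips of arbitrary lengths), noting that both sequences of moves put every bead in the same final position.
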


\begin{proof}
Let $\mu$ be a partition such that $\lambda / \mu$ is a border-strip
of length divisible by $r$ and $\mu/\nu$ is a skew-partition. Suppose that
$\mu$ is obtained by moving a bead $b$ on runner $t$ of $\lambda$.
By Lemma~\ref{lemma:dec}(ii) there are positions
$\alpha_1 < \beta_1  < \cdots < \alpha_c < \beta_c$
on runner $t$, such that, for each $k \in \{1,\ldots, c\}$,
position $\beta_k$ has a bead, positions $\alpha_k, \alpha_k+r,\ldots, \beta_k-r$
have gaps, and $\nu$ is obtained by moving the bead in position~$\beta_k$ to the gap
in position $\alpha_k$.
Since $\mu/\nu$ is a skew-partition, the
bead $b$ must be in one of the positions $\beta_k$ and, after the move giving~$\mu$, it must 
be in position $\beta_k - rq$ for some $q$ such that $1 \le q \le (\beta_k-\alpha_k)/r$.
Since there are gaps in $A$ in positions $\beta_k-rq, \ldots, \beta_k-r$, this move can
also be achieved by a sequence of~$q$ single-step bead moves of~$b$.
Since there are gaps in $A$ in positions $\alpha_k,\ldots, \beta_k-(r+1)q$, the
runner obtained after moving bead $b$ 
is still $r$-decomposable.
Hence, by Lemma~\ref{lemma:dec}, $\mu/\nu$ is $r$-decomposable.
By Proposition~\ref{prop:anyorder}, noting that bead $b$ can be moved 
from position $\beta_k-rq$ to position $\alpha_k$ by  single-step bead moves,
we have $\sgn(\lambda/\mu)\sgn_r(\mu/\nu) = \sgn_r(\lambda/\nu)$. 

It follows that all the non-zero summands on the right-hand side of~\eqref{eq:sgn} are equal
to $\sgn_r(\lambda/\nu)$. 
The number of partitions~$\mu$ obtained by moving
a bead on runner $t$ that give a non-zero summand
is $(\beta_1-\alpha_1)/r + \cdots + (\beta_c - \alpha_c)/r$. Summing
over all runners, and using that $\lambda / \nu$ is a skew-partition of $rm$,
we see that there are exactly $m$ non-zero summands. This completes the proof.
\end{proof}

For an example consider the skew-partition 
$\lambda / \nu$ and the border-strip $\lambda /\mu$
shown in Figure~1. The partition $\mu$ is obtained by moving
the bead in position $15$ to position $5$: we denote this move by $(15,5)$.
The final $2$~border-strips removed from $\mu$ to obtain $\nu$ correspond to the bead moves
$(19,17), (14,12), (5,3), (4,2), (2,0)$ and have
heights $0, 0, 1,1,1$. Since $\lambda / \mu$ has height $3$, we have
$\sgn(\lambda/\mu)\sgn_2(\mu /\nu)$ $= (-1)^6 = 1$. The 
final $2$ border-strips removed from $\lambda$ to obtain $\nu$
correspond to the bead moves
$(19,17)$, $(15,13)$, $(14,12)$, $(13,11)$, $(11,9)$, $(9,7)$, 
$(7,5)$, $(5,3)$, $(4,2)$, $(2,0)$
and have heights $0, 1, 1, 1,0,1,1$, $1$, $1$, $1$,
so $\sgn_2(\lambda / \nu) = (-1)^8 = 1$.

\begin{proposition}\label{prop:II}
If there is a unique runner of $A$ of type \emph{(II)} and all other runners
have type \emph{(I)} then both sides of~\eqref{eq:sgn} are zero.
\end{proposition}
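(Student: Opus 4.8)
The plan is to show that both sides of~\eqref{eq:sgn} vanish. The left-hand side is immediate: the type~(II) runner is by definition not $r$-decomposable, so Lemma~\ref{lemma:dec}(ii) shows that $\lambda/\nu$ is not $r$-decomposable and hence $\sgn_r(\lambda/\nu)=0$. For the right-hand side I would first reduce to bead moves on the unique type~(II) runner, which I call runner~$u$. A border-strip of length divisible by $r$ corresponds, via Lemma~\ref{lemma:basic}(ii), to a bead move on a single runner. If this move is on a type~(I) runner $t\neq u$, then runner~$u$ is unchanged and is still not $r$-decomposable, so $\mu/\nu$ is not $r$-decomposable by Lemma~\ref{lemma:dec}(ii) and the summand $\sgn(\lambda/\mu)\sgn_r(\mu/\nu)$ vanishes. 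Thus only the moves on runner~$u$ that leave an $r$-decomposable runner contribute, and it suffices to show that the signed contributions of these moves cancel in pairs.

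Next I would pin down the structure of runner~$u$. Label the beads $b_1,b_2,\ldots$ from the top and match the $k$th bead of $A$ to the $k$th bead of the abacus $C$ for $\nu$; this matching is forced, since a single-step bead move never exchanges two beads and so preserves their relative order. One then checks that a runner is $r$-decomposable precisely when the source position of each bead lies strictly above the destination position of the bead immediately below it. Being of type~(II), runner~$u$ fails this condition in exactly one place: there is a single pair of consecutive beads, say $b$ above $b'$, whose travel regions bump, and away from this pair the runner is $r$-decomposable. I would then prove that the bead moves on runner~$u$ producing an $r$-decomposable runner are exactly the moves sending $b$, or sending $b'$, to a common gap~$q$ lying between the destinations in $C$ of $b$ and $b'$. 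The pairing is the involution that fixes~$q$ and interchanges which of $b$ and $b'$ is moved; it is fixed-point-free because $b\neq b'$, and since the two moves delete the bead in different source positions it sends each contributing $\mu$ to a genuinely different contributing~$\mu$.

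Finally I would verify the sign cancellation using Lemma~\ref{lemma:signs}, which is what makes the interference of beads on other runners harmless. Writing the removal of $\lambda/\mu$ followed by a decomposition of $\mu/\nu$ as one sequence of bead moves from $A$ to $C$, Lemma~\ref{lemma:signs} identifies $\sgn(\lambda/\mu)\sgn_r(\mu/\nu)$ with $(-1)^{|\mathcal{J}_\mu|}$, where $\mathcal{J}_\mu$ depends only on $A$ and on the final position of each labelled bead. Tracing the two paired moves through to $C$, and writing $\delta<\delta'$ for the destinations of $b$ and $b'$, one sees that moving $b$ to $q$ sends $b\mapsto\delta$ and $b'\mapsto\delta'$, whereas moving $b'$ to $q$ sends $b\mapsto\delta'$ and $b'\mapsto\delta$, every other bead finishing in the same place in both cases. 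The two assignments therefore differ by the transposition of $b$ and $b'$, so $|\mathcal{J}_\mu|$ changes by an odd number and the two contributions are equal and opposite; summing over~$q$ gives zero. The main obstacle is the structural step: proving that a type~(II) runner has a unique bumping pair and that the contributing moves are exactly those described, so that the involution is well defined. This is precisely where the hypothesis that runner~$u$ is one swap away from $r$-decomposable — rather than merely not $r$-decomposable — is essential, since it is what forces the obstruction to be a single consecutive pair.
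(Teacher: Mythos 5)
Your treatment of the left-hand side, the reduction of the right-hand side to bead moves on the unique type (II) runner $u$, and your order-matching criterion for $r$-decomposability (match the $k$th bead of runner $u$ of $A$ to the $k$th bead of runner $u$ of $C$; the runner is $r$-decomposable if and only if each bead's source lies strictly above the destination of the bead immediately below it) are all correct. Your closing observation --- that the two paired sequences of moves differ only by transposing the final positions of two beads, so $|\mathcal{J}|$ changes parity --- is also sound, and is in fact a cleaner route to the parity flip than the paper's bookkeeping with the sets $X_\mathcal{J}$, $Y_\mathcal{J}$, $X_{\mathcal{J}^\star}$, $Y_{\mathcal{J}^\star}$. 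However, the structural step that you yourself flag as the main obstacle is false, and it is contradicted by the paper's own example (Figure~2, $r=2$). There, runner $0$ of $A$ has beads at positions $0,8,10,14,18$ which must finish at positions $0,2,6,10,12$ of $C$; the consecutive pairs $(8,10)$, $(10,14)$ and $(14,18)$ all bump (since $8\ge 6$, $10\ge 10$, $14\ge 12$), so a type (II) runner can have many bumping pairs: the failures cascade down the runner. Consequently your description of the contributing moves is also wrong. In this example the moves producing an $r$-decomposable runner are exactly those sending the bead at $18$ or the bead at $14$ to the gap at $2$ or at $4$ (the paper's set $P$), and these gaps lie far above the destinations $10$ and $12$ of those two beads, not between them; conversely, no move of a bead from a bumping pair to a gap lying (weakly) between that pair's destinations yields an $r$-decomposable runner here. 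So your involution would pair up nothing, and you would wrongly conclude that the right-hand side of~\eqref{eq:sgn} has no non-zero summands, when in fact it has eight, cancelling in pairs.

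The correct structure, which is the real content of the proof, is a maximal cascading chain: there are positions $\alpha_0 < \alpha_1 \le \beta_1 < \cdots < \alpha_b \le \beta_b < \beta_{b+1}$ on runner $u$ such that the beads in this range sit exactly at $\beta_1,\ldots,\beta_{b+1}$ and the bead at $\beta_k$ must finish at $\alpha_{k-1}$. One must then show that a swap $(\epsilon,\gamma)$ yields an $r$-decomposable runner only when $\epsilon$ is one of the two \emph{lowest} beads of the chain, $\delta^\star=\beta_b$ or $\delta=\beta_{b+1}$, and $\gamma$ is a gap above the \emph{whole} chain, in the range from $\alpha_0$ up to (but not including) $\alpha_1$; and moreover that $(\delta,\gamma)$ works if and only if $(\delta^\star,\gamma)$ does, so that the involution swapping $\delta$ and $\delta^\star$ over a common $\gamma$ is well defined. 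Your "unique bumping pair with a gap between its destinations" picture only resembles the case $b=1$, and even there the admissible gaps sit at or above the destination of the \emph{upper} bead, not between the two destinations. Once this structure is established, your transposition-parity argument finishes the proof essentially as the paper does.
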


\begin{proof}
By Lemma~\ref{lemma:dec}(ii), 
$\lambda / \nu$ is not $r$-decomposable. Hence the left-hand side of~\eqref{eq:sgn}
is zero. Let runner $t$ be the unique runner of $A$ of type (II).
Since runner $t$ is not $r$-decomposable, 
there are beads $d$ and $d^\star$ on this runner, 
in positions $\delta$ and $\delta^\star$ respectively,
such that $\delta > \delta^\star$ and in any sequence of single-step bead moves
leading from $A$ to $C$, bead~$d$ 
finishes above
position~$\delta^\star$. Choose $\delta$ maximal with this property. Then
there are positions 
\[ \alpha_0 < \alpha_1 \le \beta_1 < \alpha_2 \le \beta_2 < 
\cdots < \alpha_b \le \beta_{b} < \beta_{b+1} \]
on runner $t$
such that (a) $\beta_{b} = \delta^\star$ and $\beta_{b+1} = \delta$, 
(b) the beads between positions $\alpha_0$ and $\beta_{b+1}$ are exactly those
in positions $\beta_1, \ldots, \beta_{b+1}$,
(c) in~any sequence of single-step bead moves leading from $A$ to $C$,
for each $k \in \{1,\ldots, b+1\}$, 
the bead in position $\beta_k$ finishes in the gap in position $\alpha_{k-1}$,
and~(d) swapping bead $d$ with the gap in position $\alpha_0$ gives
an $r$-decomposable runner.

Let~$P$ be the set of pairs $(\epsilon,\gamma)$ such that $\epsilon$ and $\gamma$
are positions on runner $t$ and the runner obtained
by swapping the bead in position $\epsilon$ 
with the gap in position $\gamma$ is $r$-decomposable. It follows from the
choice of $\delta$ that if $(\epsilon,\gamma) \in P$ then $\epsilon \in \{\delta^\star,
\delta\}$ and that $(\delta,\gamma) \in P$ if and only if $(\delta^\star,\gamma) \in P$.
Hence
\[ P = \bigl\{ (\epsilon, \gamma) : \epsilon \in \{\delta,\delta^\star\}, \
\gamma \in \{\alpha_0, \ldots, \alpha_1-1\} \bigr\}. \]
%In particular, the
%set of possible positions $\gamma$ is a run of consecutive gaps on runner $t$.

Let $(\delta,\gamma) \in P$, let $B$ be the abacus obtained by swapping bead $d$ with the gap in position $\gamma$, and
let $\mu$ be the partition represented by $B$. Define~$B^\star$ and $\mu^\star$
analogously, replacing $d$ with $d^\star$. It suffices to show that
\begin{equation}
\label{eq:final}
\sgn(\lambda/\mu^\star)\sgn_r(\mu^\star/\nu) = -\sgn(\lambda/\mu)\sgn_r(\mu/\nu)
\end{equation}
so the contributions from $\mu$ and $\mu^\star$ to~\eqref{eq:sgn} cancel.
We do this using Lemma~\ref{lemma:signs} and
a sign reversing pairing on sequences of bead moves from $A$ to $C$.
This pairing is illustrated in Figure~2 and in the example following this proof.

%Fix a sequence of single-step bead moves leading from $B$ to $C$. 
%Suppose
%that on the $k$th move, a bead is moved from position $\beta_k$ to position $\beta_k-r$.
%After
% $(\delta-\delta^\star)/r$  single-step moves of the bead $d$ in
%position $\delta$ of $B^\star$ to position $\delta^\star$, we obtain
%an abacus having $d$ in position $\delta^\star$ and $d^\star$ in position $\gamma$.
%This abacus has beads in exactly the same positions as $B$, since 
%in $B$, $d$ is in position
%$\gamma$ and $d^\star$ is in position $\delta^\star$ and no other beads have been moved.
%Thus if we fix a sequence of moves leading from $B$ to $C$ then the same sequence,
%prefixed by the single-step moves of bead $d$, leads from $B^\star$ to $B$ to $C$.
%%the same sequence of moves leads from $B^\star$ to $C$.

Fix a sequence of bead moves that first swaps bead 
$d$ with the gap in position~$\gamma$ (giving~$B$) then makes
single-step bead moves to go from $B$ to~$C$. This sequence is paired with the sequence that
%We pair this sequence with the sequence %of bead moves from $A$ to $C$
first swaps bead $d^\star$ with the gap in position~$\gamma$ 
(giving~$B^\star$), then moves bead $d$
to position $\delta^\star$ by  single-step moves
(giving $B$,  with beads $d$ and $d^\star$ swapped compared to the first sequence)
and then makes the same sequence of single-step moves to go from $B$ to $C$.
Let $\mathcal{J}$ and $\mathcal{J}^\star$ be the set of pairs $\{ \beta,\beta' \}$
defined, as in Lemma~\ref{lemma:signs}, for these two sequences of bead moves.
It is clear that $\mathcal{J}$ and $\mathcal{J}^\star$ agree
except for pairs involving the positions $\delta$ and $\delta^\star$. Moreover
$\{ \delta,\delta^\star \} \in \mathcal{J} \backslash \mathcal{J}^\star$.
%We now use the 
%parenthetical remarks in the previous paragraph to give a bijective
%proof that $|\mathcal{J}| = |\mathcal{J}^\star| + 1$.

Let $\alpha$ and $\alpha^\star$ be, respectively, 
the final positions of beads $d$ and $d^\star$ in $C$ after the sequence
of moves from $A$ to $B$ to $C$. (Equivalently, $\alpha$ and $\alpha^\star$ are, respectively, the final position
of beads $d^\star$ and $d$ in $C$, after the sequence of moves from $A$ to $B^\star$ to $B$ to $C$.) 
Let $\mathcal{A}$ be the set of positions of $A$ that have a bead, excluding positions $\delta$ and $\delta^\star$.
For $\beta \in \mathcal{A}$, let $\bar{\beta}$ 
be the final position in $C$, after either sequence of moves, 
of the bead starting in position $\beta$ of $A$.
%Given a $b$ in $A$ such that $b\not= d,\,d^\star$ let $b$ is in position $\beta$ of $A$
%and ends in position $\bar{\beta}$ of $C$.

The following four claims are routine to check:
\begin{align*}
\text{$\{\beta,\delta\} \in \mathcal{J}$ and $\{\beta,\delta^\star\} \not\in \mathcal{J}^\star$}
&\iff \text{$\delta^\star < \beta < \delta$ and $\alpha < \bar{\beta}$,} \\
\text{$\{\beta,\delta^\star\} \in \mathcal{J}$ and $\{\beta,\delta\} \not\in \mathcal{J}^\star$}
&\iff \text{$\delta^\star < \beta < \delta$ and $\bar{\beta} < \alpha^\star$,} \\
\text{$\{\beta,\delta\} \not\in \mathcal{J}$ and $\{\beta,\delta^\star\} \in \mathcal{J}^\star$}
&\iff \text{$\delta^\star < \beta < \delta$ and $\bar{\beta} < \alpha$,} \\
\text{$\{\beta,\delta^\star\} \not\in \mathcal{J}$ and $\{\beta,\delta\} \in \mathcal{J}^\star$}
&\iff \text{$\delta^\star < \beta < \delta$ and $\alpha^\star < \bar{\beta}$.}
\end{align*}
Let $X_\mathcal{J}$, $Y_\mathcal{J}$, $X_{\mathcal{J}^\star}$, $Y_{\mathcal{J}^\star}$ be the sets
of $\beta \in \mathcal{A}$ satisfying each of these conditions, respectively. These sets are obstacles
to a bijection $\mathcal{J} \backslash \big\{ \{\delta,\delta^\star \} \bigr\} \longleftrightarrow
\mathcal{J}^\star$ defined by
$\{\beta,\delta\} \longleftrightarrow \{\beta,\delta^\star\}$.
%Thus the obstacles to a bijection are the sets
%\begin{align*} 
%X_\mathcal{J} &= \{ \beta \in \mathcal{A} : \delta^\star < \beta < \delta, \alpha < \bar{\beta} \} \\
%Y_\mathcal{J} &= \{ \beta \in \mathcal{A} : \delta^\star < \beta < \delta, \bar{\beta} < \alpha^\star \} \\
%X_{\mathcal{J}^\star} &= \{ \beta \in \mathcal{A} : \delta^\star < \beta < \delta, \bar{\beta} < \alpha \} \\
%Y_{\mathcal{J}^\star} &= \{ \beta \in \mathcal{A} : \delta^\star < \beta < \delta, \alpha^\star < \bar{\beta} \}.
%\end{align*}
Observe that
\begin{align*} 
X_\mathcal{J} &= \{ \beta \in \mathcal{A} : \delta^\star < \beta < \delta,\ \alpha < \bar{\beta} < \alpha^\star \} \cup Y_{\mathcal{J}^\star},  \\
Y_\mathcal{J} &= \{ \beta \in \mathcal{A} : \delta^\star < \beta < \delta,\ \alpha < \bar{\beta} < \alpha^\star \} \cup X_{\mathcal{J}^\star}. 
\end{align*}
%\begin{align*} 
%X_\mathcal{J} &= \{ \beta \in X_\mathcal{J} : \bar{\beta} < \alpha^\star \} \cup Y_{\mathcal{J}^\star}  \\
%Y_\mathcal{J} &= \{ \beta \in X_\mathcal{J} : \alpha < \bar{\beta} \} \cup X_{\mathcal{J}^\star} 
%\end{align*}
It follows that
\[ |\mathcal{J}| = 2\bigl| 
\{ \beta \in \mathcal{A} : \delta^\star < \beta < \delta,\ \alpha < \bar{\beta} < \alpha^\star \} \bigr|
+ |\mathcal{J}^\star| + 1
\]
where the final summand comes from $\{\delta, \delta^\star\}$.
Hence  $|\mathcal{J}|$ and $|\mathcal{J}^\star|$ have opposite parities. Equation~\eqref{eq:final}
now follows from Lemma~\ref{lemma:signs}. This completes the proof.
\end{proof}

\begin{figure}[b]
%\vspace*{-0.2in}
\hspace*{-0.2in}\scalebox{1}{\includegraphics{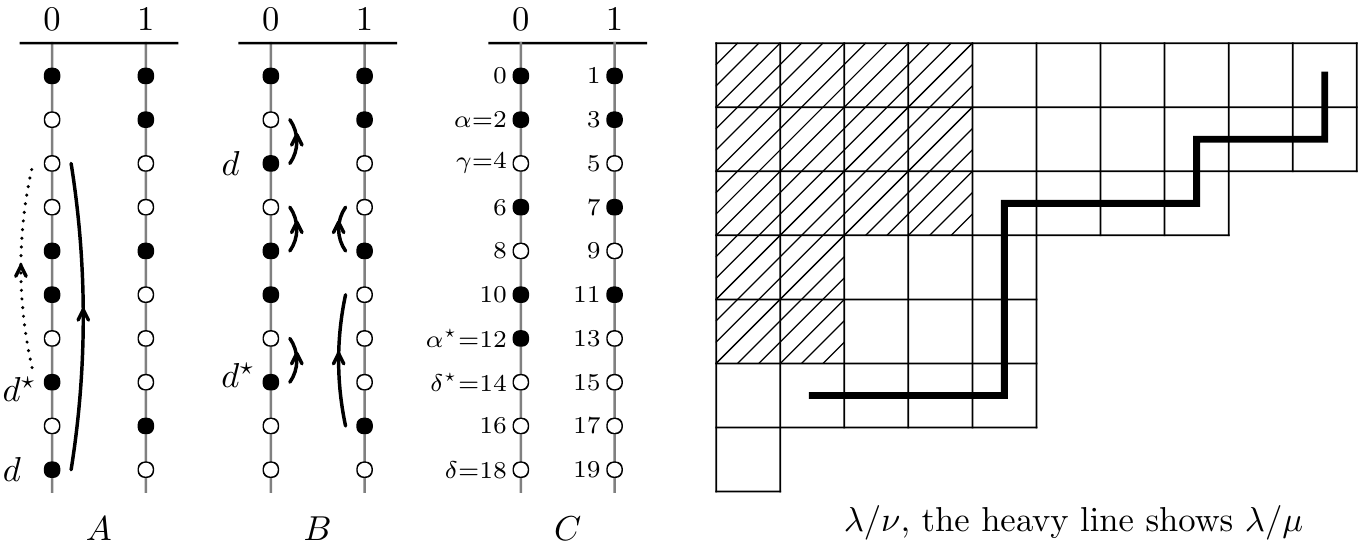}}

\vspace*{0.1in}
\hspace*{-0.2in}\scalebox{1}{\includegraphics{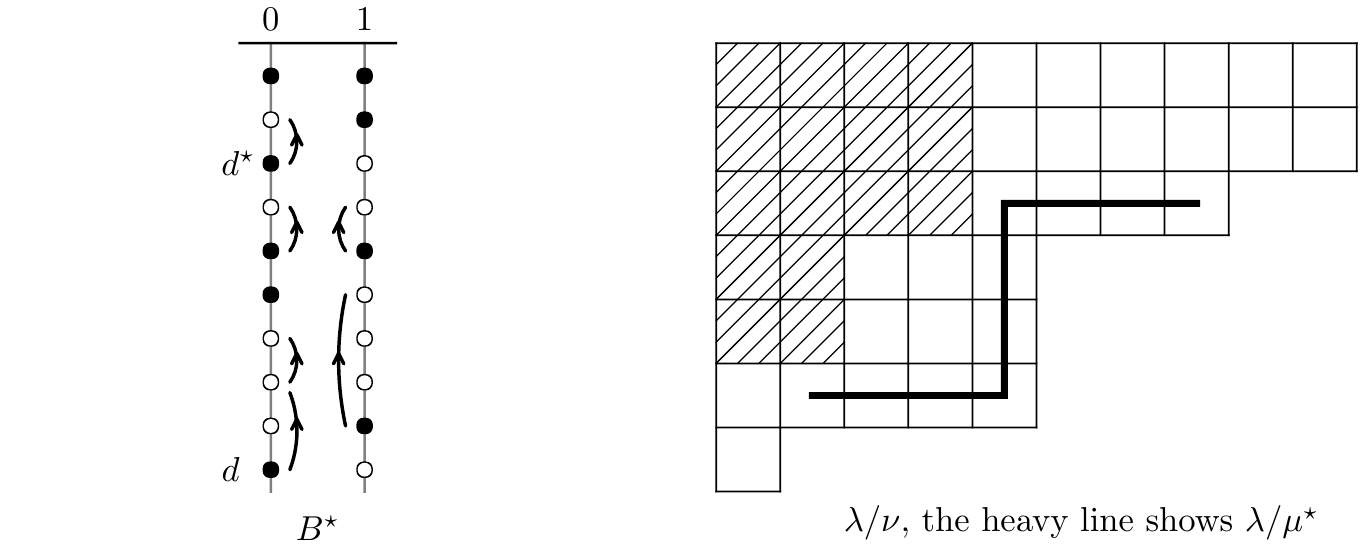}}
\caption{\small Example to illustrate Proposition~\ref{prop:II} when 
$r=2$ and $\lambda/\nu = (10,10,8,5,5,5,1) / (4,4,4,2,2)$. The partitions $\mu$ and $\mu^\star$ are $(9,7,4,4,4,1,1)$ and $(10,10,4,4,4,1,1)$.
%The border-strips $\lambda/\mu$ and $\lambda/\mu^\star$ are marked on the top
%and bottom Yound diagrams respectively.
Abaci
$A$, $B$, $B^\star$ and $C$ for $\lambda$, $\mu$, $\mu^\star$, $\nu$, respectively, 
are shown. The 
bead moves between these abaci are indicated by arrows: $B$ is obtained
from $A$ by the move $(\delta,\gamma) = (18,4)$ shown by a solid arrow and $B^\star$
is obtained from $A$ by the move $(\delta^\star,\gamma) = (14,4)$
shown by a dotted arrow.}
\end{figure}

In the example shown in Figure~2 below with $r=2$, we have 
$\delta = 18$, $\delta^\star = 14$,
$\alpha^\star = 12$, $\alpha = 2$ and $\gamma = 4$. The
set $P$ is $\{(\delta,2), (\delta^\star,2), (\delta,4), (\delta^\star, 4)\}$. 
The sets $\mathcal{J}$ and $\mathcal{J}^\star$ are
\begin{align*}
\mathcal{J} &= \bigl\{ \{10,18\}, \{9,18\}, \{8,18\}, \{3,18\} \bigr\} \cup
\bigl\{ \{17,18\}, \{14,17\}  \bigr\} \cup \bigl\{ \{14,18\} \bigr\}, \\
\mathcal{J}^\star &= \bigl\{ \{10,14\}, \{9,14\}, \{8,14\}, \{3,14\} \bigr\}.
\end{align*}
The second set in the union for $\mathcal{J}$ gives the pairs coming from
$X_\mathcal{J} = Y_\mathcal{J} = 
 \{ \beta \in \mathcal{A} :
\delta^\star < \beta < \delta, \ \alpha < \bar{\beta} < \alpha^\star \} = \{17\}$.
In this example $X_{\mathcal{J}^\star} = Y_{\mathcal{J}^\star} = \varnothing$. We have
%\[ 
$\sgn(\lambda/  \mu^\star) \sgn_2(\mu^\star / \nu) = 1  = -
\sgn(\lambda / \mu)\sgn_2(\mu/\nu)$ %\]
as predicted by~\eqref{eq:sgn}.

\section*{Acknowledgements}

The author thanks an anonymous referee for
exceptionally useful and detailed comments on an earlier version of this paper. He also
thanks Anton Evseev and Rowena Paget for their comments and many helpful and stimulating discussions.

\def\cprime{$'$} \def\Dbar{\leavevmode\lower.6ex\hbox to 0pt{\hskip-.23ex
  \accent"16\hss}D} \def\cprime{$'$}
\providecommand{\bysame}{\leavevmode\hbox to3em{\hrulefill}\thinspace}
\providecommand{\MR}{\relax\ifhmode\unskip\space\fi MR }
% \MRhref is called by the amsart/book/proc definition of \MR.
\providecommand{\MRhref}[2]{%
  \href{http://www.ams.org/mathscinet-getitem?mr=#1}{#2}
}
\providecommand{\href}[2]{#2}

\end{document}